
\documentclass[11pt]{article}
\pdfoutput=1                            

\usepackage[utf8]{inputenc}				
\usepackage[english]{babel}				

\usepackage{fullpage}					
\usepackage[shortlabels]{enumitem}		

\usepackage{cite}						

\usepackage{calc}						

\usepackage{amsmath}					
\usepackage{colonequals}				

\usepackage[pagebackref=true,hidelinks]{hyperref}
\renewcommand*{\backrefalt}[4]{%
    \ifcase #1 \footnotesize{(Not cited.)}%
    \or        \footnotesize{(Cited on p.~#2)}%
    \else      \footnotesize{(Cited on pp.~#2)}%
    \fi}

\usepackage{booktabs}					
\usepackage{multirow}
\usepackage{siunitx}

\usepackage{tikz}  						
\graphicspath{{./graphics/}}

\usepackage[margin=10pt,font=small,labelfont=bf,labelsep=colon]{caption}
\captionsetup[figure]{name=Fig.}

\usepackage[capitalise,nameinlink]{cleveref}

\usepackage{amsthm}
\usepackage{thmtools}

\def\qed{\rule[0pt]{5pt}{5pt}\par\medskip}
\renewcommand{\qedhere}{\hfill ~\qed}
\renewenvironment{proof}{{\noindent\bf Proof.}}{\qedhere}

\usepackage{comment}
\usepackage[math]{blindtext}

\usepackage{parskip}



\usepackage{cite}                       
\usepackage{amsmath,amssymb,amsfonts}   
\usepackage{graphicx}                   
\usepackage{colonequals}                
\usepackage{arydshln}                   

\newtheorem{theorem}{Theorem}
\newtheorem{proposition}{Proposition}
\newtheorem{definition}{Definition}
\newtheorem{lemma}{Lemma}
\newtheorem{remark}{Remark}

\renewcommand{\epsilon}{\varepsilon}
\newcommand{\real}{\mathbb{R}}
\newcommand{\complex}{\mathbb{C}}
\newcommand{\1}{\mathbf{1}}
\newcommand{\0}{\mathbf{0}}
\newcommand{\G}{\mathbf{G}}
\renewcommand{\H}{\mathbf{H}}
\newcommand{\defeq}{\colonequals}
\newcommand{\diag}{\text{diag}}
\newcommand{\tp}{\mathsf{T}}
\newcommand{\bmat}[1]{\begin{bmatrix}#1\end{bmatrix}}
\newcommand{\grad}{\nabla\! }
\newcommand{\norm}[1]{\|#1\|}

\newcommand{\Gopt}{\ensuremath{G_\mathrm{opt}}}
\newcommand{\Gcon}{\ensuremath{G_\mathrm{con}}}
\newcommand{\Gconleft}{\ensuremath{G_\mathrm{con1}}}
\newcommand{\Gconright}{\ensuremath{G_\mathrm{con2}}}

\newcommand{\Goptz}{\ensuremath{\widehat{G}_\mathrm{opt}}}
\newcommand{\Gconz}{\ensuremath{\widehat{G}_\mathrm{con}}}
\newcommand{\Gconleftz}{\ensuremath{\widehat{G}_\mathrm{con1}}}
\newcommand{\Gconrightz}{\ensuremath{\widehat{G}_\mathrm{con2}}}

\newcommand{\GOPT}{\ensuremath{\mathbf{G}_\mathbf{opt}}}
\newcommand{\GCON}{\ensuremath{\mathbf{G}_\mathbf{con}}}

\newcommand{\df}{\ensuremath{\mathbf{\nabla f}}}
\renewcommand{\L}{\ensuremath{\mathbf{L}}}

\renewcommand{\u}{\mathbf{u}}
\renewcommand{\v}{\mathbf{v}}
\newcommand{\e}{\mathbf{e}}
\newcommand{\w}{\mathbf{w}}
\newcommand{\x}{\mathbf{x}}
\newcommand{\y}{\mathbf{y}}
\newcommand{\z}{\mathbf{z}}

\def\algorithmverticalspace{2mm}

\newcommand\Tstrut{\rule{0pt}{2.6ex}}         
\newcommand\Bstrut{\rule[-1.2ex]{0pt}{0pt}}   

\usepackage[bottom]{footmisc}   

\begin{document}

\title{A Universal Decomposition for\\Distributed Optimization Algorithms}

\author{
  Bryan Van Scoy\thanks{B.~Van~Scoy is with the Department of Electrical and Computer Engineering, Miami University, OH~45056, USA. Email \texttt{bvanscoy@miamioh.edu}}
\and
  Laurent Lessard\thanks{L.~Lessard is with the Department of Mechanical and Industrial Engineering, Northeastern University, MA~02115, USA. Email \texttt{l.lessard@northeastern.edu}}
}
\date{}
\maketitle


\begin{abstract}
  In the distributed optimization problem for a multi-agent system, each agent knows a local function and must find a minimizer of the sum of all agents' local functions by performing a combination of local gradient evaluations and communicating information with neighboring agents. We prove that every distributed optimization algorithm can be factored into a centralized optimization method and a second-order consensus estimator, effectively separating the ``optimization'' and ``consensus'' tasks. We illustrate this fact by providing the decomposition for many recently proposed distributed optimization algorithms. Conversely, we prove that any optimization method that converges in the centralized setting can be combined with any second-order consensus estimator to form a distributed optimization algorithm that converges in the multi-agent setting. Finally, we describe how our decomposition may lead to a more systematic algorithm design methodology.
\end{abstract}



\section{Introduction}

We consider the distributed optimization problem
\begin{alignat*}{2}
  &\text{minimize}   \quad && \sum_{i=1}^n f_i(x_i) \\
  &\text{subject to} \quad && x_1 = x_2 = \ldots = x_n,
\end{alignat*}
where $f_i$ is the local objective function and $x_i$ the local decision variable on agent $i\in\{1,\ldots,n\}$. The problem is to minimize the sum of the local objective functions subject to agreement among the agents on the solution, where each agent $i\in\{1,\ldots,n\}$ can evaluate its local gradient $\grad  f_i$ and can communicate with (and only with) neighboring agents\footnote{Some authors refer to this as the \textit{consensus optimization problem} and to such algorithms as \textit{decentralized}.}.

Many distributed algorithms have been proposed in the literature, and several recent works have attempted to uncover an underlying algorithmic structure. The work \cite{Unification} developed a framework that unified the EXTRA \cite{EXTRA} and DIGing \cite{DIGing} algorithms. The work \cite{canform} found a canonical form for distributed algorithms that encompasses cases where each agent has two state variables. This canonical form, however, was limited to non-accelerated algorithms. To handle acceleration, Han \cite{shuo} showed that an optimization method could be combined with two first-order consensus estimators to form a valid distributed optimization algorithm. This structure captures many algorithms, but not all algorithms decompose into this form. This is particularly important when the structure is used for design, as a limited structure may lead to suboptimal performance. For example, the structure in \cite{shuo} is unable to represent the SVL algorithm \cite{SVL}, which is an optimized implementation of inexact ADMM (see Section \ref{sec:non-accelerated}).

Our main result overcomes the aforementioned limitations and shows that a broad class of distributed optimization algorithm can be decomposed into a centralized optimization method and a second-order consensus estimator as shown in Figure \ref{fig:decomp}. Specifically, our decomposition applies to algorithms that are linear time-invariant (LTI) systems in feedback with the gradient of the objective function and the graph Laplacian. Conversely, we show that any centralized optimization method can be combined with any second-order consensus estimator to form a distributed optimization algorithm (under mild technical conditions).

\begin{figure}[h]
  \centering\includegraphics{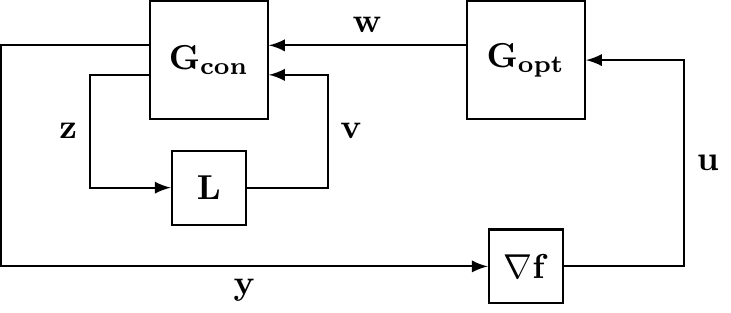}
  \caption{Universal decomposition of a distributed optimization algorithm into an optimization method $\GOPT$ and second-order consensus estimator $\GCON$, where $\df$ is the gradient of the local objective functions and $\L$ is the graph Laplacian.}
  \label{fig:decomp}
\end{figure}

Our decomposition has several benefits. First, it provides a non-conservative parameterization of distributed optimization algorithms in terms of their components, which can then be systematically analyzed using tools from robust control; see \cite{lessard,sundararajan_allerton,SVL} for the details of such analyses. Our decomposition also assists algorithm designers by simplifying the taxonomy of distributed optimization algorithms. Simply put, one need not look any further than the already vast literature on gradient-based optimization methods \cite{tmm,nesterov-book,polyak1964} and consensus estimators \cite{consensus,dac,fast-consensus}.

\paragraph{Notation.} Subscripts denote an agent's index, and bold symbols to refer to quantities aggregated over all agents, such as $\x = (x_1,\ldots,x_n)$. Superscripts denote time indices, as $\{y^0,y^1,\dots\}$. Symbols $\1$ and $\0$ denote the $n$-dimensional vector of all ones and all zeros, respectively. The symbol $\otimes$ denotes the Kronecker product. For an LTI operator $G$, the corresponding transfer function is  $\widehat G(z)$. A transfer function is \emph{stable} if all of its poles are in the open unit disk.

\paragraph{Assumption.} To simplify notation, we assume the local objective functions are one-dimensional, ${f_i : \real\to\real}$. Our results generalize to the multidimensional case ${f_i:\real^d\to\real}$ under appropriate restrictions on the algorithm form.

\section{Preliminaries}

Before describing algorithms for distributed optimization, we first describe optimization and consensus separately. We make extensive use of the Final Value Theorem (FVT), which we state here for completeness (e.g., \cite[pp.~2-12, 2-15]{levine2018control}).

\begin{proposition}[Final Value Theorem]
  Suppose $y^t$ has the unilateral $z$-transform $\hat y(z)$. The following are equivalent.
  \begin{itemize}
    \item The limit of $y^t$ as $t\to\infty$ exists and is finite.
    \item $(z-1)\,\hat y(z)$ is stable.
  \end{itemize}
  If the above hold, then $\lim_{t\to\infty} y^t = \lim_{z\to 1} (z-1)\,\hat y(z)$.
\end{proposition}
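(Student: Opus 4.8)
The plan is to reduce everything to the first-difference sequence $d^t \defeq y^t - y^{t-1}$ (for $t\ge 1$, with $d^0 \defeq y^0$), whose unilateral $z$-transform satisfies the accumulation identity $\hat d(z) = (1-z^{-1})\,\hat y(z)$, equivalently $(z-1)\,\hat y(z) = z\,\hat d(z)$. This follows directly from $\hat y(z) = \sum_{t\ge0} y^t z^{-t}$ by shifting the summation index, and it is the algebraic heart of the theorem: the factor $(z-1)$ appearing in the statement is exactly the $z$-domain avatar of first-differencing, while $\lim_{t\to\infty} y^t$ is precisely the total sum $\sum_{t\ge0} d^t$ of the differences.

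First I would establish the value formula assuming both equivalent conditions hold. Telescoping gives $y^N = \sum_{t=0}^N d^t$, so $\lim_{t\to\infty} y^t$ exists and is finite exactly when the series $\sum_{t\ge0} d^t$ converges, in which case it equals that sum. On the transform side, $\lim_{z\to1}(z-1)\hat y(z) = \lim_{z\to1} z\,\hat d(z) = \hat d(1)$, and $\hat d(1) = \sum_{t\ge0} d^t$ by Abel's theorem applied to the power series $\hat d$ in the variable $z^{-1}$. Matching the two expressions gives $\lim_{t\to\infty} y^t = \lim_{z\to1}(z-1)\hat y(z)$.

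It remains to prove the equivalence of the two bulleted conditions, and here I would exploit the rational (LTI) structure implicit in the paper's notion of stability, writing $\hat y(z)$ through its partial-fraction decomposition so that $y^t$ is a combination of modes $t^j\lambda^t$. The limit $\lim_{t\to\infty} y^t$ exists and is finite if and only if every pole $\lambda$ of $\hat y(z)$ lies strictly inside the unit disk, except possibly a single simple pole at $z=1$ (a pole with $|\lambda|>1$ blows up, a pole with $|\lambda|=1$ and $\lambda\ne1$ oscillates, and a repeated pole at $z=1$ grows polynomially). Multiplying by $(z-1)$ cancels exactly this admissible simple pole and introduces no new ones, so $(z-1)\hat y(z)$ is stable; conversely, stability of $(z-1)\hat y(z)$ forces every pole of $\hat y(z)$ to lie inside the disk apart from an at-most-simple pole at $z=1$, returning us to the convergent case and, via the decaying modes, to an absolutely convergent $\sum_{t\ge0} d^t$.

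I expect the main obstacle to be the careful bookkeeping in this last step: isolating the borderline pole at $z=1$ and distinguishing the admissible simple pole from an inadmissible double pole, making precise the sense in which a possibly improper rational function such as $z\,\hat d(z)$ is \emph{stable}, and justifying the interchange of the limit $z\to1$ with the infinite sum via Abel's theorem, which is exactly what legitimizes passing from $\hat d(1)$ to $\sum_{t} d^t$ once the differences are known to decay.
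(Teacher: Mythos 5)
The paper does not prove this Proposition at all: it is stated as a standard fact and attributed to a control handbook, so there is no in-paper argument to compare against. Your proof is a correct, self-contained derivation in the setting the paper actually uses, namely rational $\hat y(z)$ (which is forced anyway, since the paper's notion of ``stable'' --- all poles in the open unit disk --- only makes sense there, and the equivalence is genuinely false for non-rational transforms). The two halves of your argument are sound: the identity $(z-1)\,\hat y(z) = z\,\hat d(z)$ with $y^N = \sum_{t=0}^N d^t$ correctly reduces the value formula to evaluating $\hat d$ at $z=1$, and the partial-fraction bookkeeping correctly characterizes convergence of $y^t$ as ``all poles strictly inside the disk except an at-most-simple pole at $z=1$,'' which is exactly stability of $(z-1)\,\hat y(z)$. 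Two small remarks. First, Abel's theorem is more machinery than you need: once $(z-1)\,\hat y(z)$ is stable, every pole of $\hat d(z)$ lies in the open unit disk, so $d^t$ decays geometrically, $\sum_t d^t$ converges absolutely, and $\hat d(1)$ is given by direct (absolutely convergent) evaluation of the series --- no boundary limit theorem is required. Second, the step ``convergence of $y^t$ forces every non-decaying mode to vanish'' silently uses the linear independence of the sequences $t^j\lambda^t$ over distinct pairs $(\lambda,j)$, so that no cancellation between modes can rescue convergence; this is standard but is the one place where a sentence of justification (e.g., a Vandermonde or $\limsup$ argument) belongs, and it is also precisely the ``careful bookkeeping'' you anticipated. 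With those points made explicit, the proof is complete and arguably more informative than the citation the paper gives.
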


\subsection{Optimization}\label{sec:opt}

A gradient-based optimization method is an iterative procedure used to find an extremum of some function $f$ by sequentially querying $\grad f$. We can view such a method as a discrete-time dynamical system $\Gopt$ in feedback with $\grad f$ \cite{lessard}.

\noindent\begin{minipage}{0.6\linewidth}
  \vspace{\algorithmverticalspace}
  \centering\includegraphics{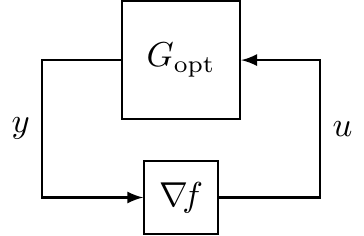}
  \vspace{\algorithmverticalspace}
\end{minipage}%
\begin{minipage}{0.4\linewidth}
  \begin{align*}
    y &= \Gopt\,u \\[5pt]
    u &= \grad  f(y)
  \end{align*}
\end{minipage}

For example, standard gradient descent uses the update $x^{t+1} = x^t - \alpha\,\grad  f(x^t)$, for which $\Gopt$ can be represented using the discrete-time transfer function $\Goptz(z) = \frac{-\alpha}{z-1}$. Methods such as gradient descent are \emph{strictly causal} and have strictly proper transfer functions.

If a method is causal but not strictly causal, then the feedback loop has a circular dependency. An example of such an algorithm is the \emph{proximal point method}, which uses the update $x^{t+1} \in \arg\min_{x} \left( f(x) + \frac{1}{2\alpha}\norm{x-x_t}^2 \right)$. The circular dependency is apparent when we write the associated first-order optimality condition: $x^{t+1} = x^t - \alpha\,\grad f(x^{t+1})$. This method has a proper transfer function: $\Goptz(z) = \frac{-\alpha z}{z-1}$.

In this letter, we define an \emph{optimization method} as a system $\Gopt$ that has the correct fixed point when placed in feedback with $\grad f$ and also exhibits convergent behavior when using a baseline set of \emph{easy} test functions $f$.

\begin{definition}\label{def:opt}
  Consider the feedback interconnection of a system $\Gopt$ with the gradient $\grad f$, where ${f(y) \defeq \frac{\epsilon}{2} \norm{y-y^\star}^2}$. The system $\Gopt$ is an \emph{optimization method} if for all $\epsilon > 0$ sufficiently small and for all $y^\star$, we have $y^t\to y^\star$ as $t\to\infty$, and $y^t$ converges to a constant when $\epsilon = 0$.
\end{definition}

If $\Gopt$ is causal, SISO, and LTI as with gradient descent and many other methods, we can characterize optimization methods via properties of the transfer function $\Goptz(z)$.

\begin{lemma}\label{lem:opt}
  A causal SISO LTI system $\Gopt$ is an optimization method if and only if the following hold:
  \begin{enumerate}
  \item[(i)] The zeros of $1-\epsilon\,\Goptz(z)$ are inside the unit circle for all $\epsilon>0$ sufficiently small.
  \item[(ii)] $\Goptz(z)$ has a pole at $z=1$ and $(z-1)\,\Goptz(z)$ is~stable.
  \item[(iii)] $\Goptz(z)$ is proper.
  \end{enumerate}
\end{lemma}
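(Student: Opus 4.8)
The plan is to feed the quadratic test function of \cref{def:opt} through the loop and read off each clause from the resulting transfer function using the Final Value Theorem (FVT). For $f(y)=\tfrac{\epsilon}{2}\norm{y-y^\star}^2$ we have $\grad f(y)=\epsilon(y-y^\star)$, so the loop equations $y=\Gopt u$ and $u=\epsilon(y-y^\star)$, together with the transform $\tfrac{z}{z-1}y^\star$ of the constant reference, yield
\[
  \hat y(z)=\frac{-\epsilon\,\Goptz(z)}{1-\epsilon\,\Goptz(z)}\,\frac{z}{z-1}\,y^\star
           =\frac{-\epsilon\,N(z)\,z}{(D(z)-\epsilon\,N(z))(z-1)}\,y^\star,
\]
where I write $\Goptz=N/D$ in lowest terms. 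The nontrivial closed-loop poles are thus the roots of $D-\epsilon N$, which are exactly the zeros of $1-\epsilon\Goptz$. Before anything else I would note that properness of $\Goptz$ (clause (iii)) is equivalent to causality and is what makes the (possibly algebraic) loop well posed for small $\epsilon$ and keeps the transforms above proper, so that the FVT is applicable throughout.

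For the $\epsilon>0$ requirement I would argue both directions simultaneously. Since $y^t\to y^\star$ in particular means $y^t$ has a finite limit, the FVT says this is equivalent to stability of $(z-1)\hat y=\tfrac{-\epsilon N z}{D-\epsilon N}y^\star$, i.e.\ to every root of $D-\epsilon N$ lying inside the unit circle --- this is precisely clause (i). Granting (i), the FVT evaluates the limit as $\lim_{z\to1}(z-1)\hat y=\tfrac{-\epsilon N(1)}{D(1)-\epsilon N(1)}\,y^\star$, and requiring this to equal $y^\star$ for every $y^\star$ is equivalent to $D(1)=0$, i.e.\ to $\Goptz$ having a pole at $z=1$ (the first half of clause (ii)); conversely that pole cancels the stray factor $z-1$ in $\hat y$ and leaves $y^t\to y^\star$.

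For the $\epsilon=0$ requirement the input vanishes and $y^t$ is the zero-input response of $\Gopt$. This converges to a constant from every initial condition exactly when each pole of $\Goptz$ is strictly inside the unit disk apart from an at-most-simple pole at $z=1$ (any other boundary or exterior pole, or a double pole at $z=1$, forces oscillation or growth), and that statement is precisely the stability of $(z-1)\Goptz$ --- the second half of clause (ii). Combining the three extracted conditions gives the equivalence.

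The step I expect to need the most care is the clause ``for all sufficiently small $\epsilon>0$'' in (i). As $\epsilon\to0^+$ the roots of $D-\epsilon N$ converge to the open-loop poles, so one root necessarily limits to $z=1$ on the boundary; clause (i) asserts that for small positive $\epsilon$ this root sits strictly inside. I would make this precise by writing $D=(z-1)\tilde D$ and solving $D-\epsilon N=0$ near $z=1$ to first order, getting $z\approx1+\epsilon\,N(1)/\tilde D(1)$, so that the root enters the disk iff the residue $N(1)/\tilde D(1)$ of $\Goptz$ at $z=1$ has negative real part --- a hidden sign/step-size condition that clause (i) quietly enforces. I would also remark that clause (i) makes the zero-input part of the response decay, so that the FVT limits computed from $\hat y$ genuinely equal $\lim_{t\to\infty}y^t$ regardless of initialization.
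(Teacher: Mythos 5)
Your proposal is correct and follows essentially the same route as the paper: substitute the quadratic test function, obtain the closed-loop map $\frac{-\epsilon\,\Goptz}{1-\epsilon\,\Goptz}$, and read conditions (i)--(iii) off via the Final Value Theorem, with properness handling causality. The extra detail you supply --- the explicit $N/D$ bookkeeping, the limit computation showing the pole at $z=1$ forces $y^t\to y^\star$, and the first-order root-locus expansion $z\approx 1+\epsilon\,N(1)/\tilde D(1)$ explaining what condition (i) quietly enforces --- is a sound elaboration of steps the paper leaves implicit rather than a different argument.
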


\begin{proof}
  Substituting the given $f$ and eliminating $u$, the closed-loop dynamics are $y = \frac{-\epsilon \Gopt}{1-\epsilon \Gopt} y^\star$. Applying the FVT, $y^t$ converging is equivalent to stability of the map and the zeros of $1-\epsilon\,\Goptz(z)$ being inside the unit circle. The limit $y^t\to y^\star$ is equivalent to $\Goptz(z)$ having a pole at $z=1$. For $\epsilon=0$, convergence to a constant is equivalent to stability of $(z-1)\,\Goptz(z)$. Causality of the system $\Gopt$ is equivalent to properness of the transfer function $\Goptz(z)$.
\end{proof}

\subsection{Consensus}\label{sec:con}

Consider a network of $n$ agents. Agent $i\in\{1,\ldots,n\}$ observes a time-varying signal $w_i$. A consensus estimator \cite{consensus,plp} is an iterative procedure where each agent communicates with its neighbors in order to form an estimate $y_i$ of the average $\frac{1}{n}\sum_{i=1}^n w_i$. Such estimators take the following form.

\noindent\begin{minipage}{0.55\linewidth}
  \vspace{\algorithmverticalspace}
  \centering\includegraphics{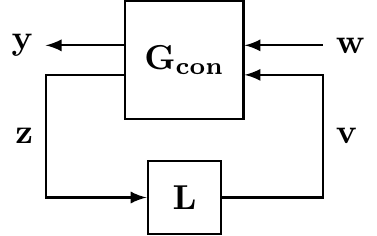}
  \vspace{\algorithmverticalspace}
\end{minipage}%
\begin{minipage}{0.45\linewidth}
  \begin{align*}
    \bmat{y_i \\ z_i} &= \Gcon \bmat{w_i \\ v_i} \\[5pt]
    v_i &= \sum_{j=1}^n a_{ij}\,(z_i-z_j)
  \end{align*}
\end{minipage}

The $n\times n$ matrix $A \defeq [a_{ij}]$ is the \textit{adjacency matrix} that describes the interaction among agents. The scalar $a_{ij}$ is the weight that agent $i$ places on information from agent $j$, with a weight of zero if no information flows from agent $j$ to~$i$. Agent $j$ is a \textit{neighbor} of agent $i$ if the weight $a_{ij}$ is nonzero, and computing $v_i$ requires agent $i$ to receive the local variables $z_j$ from each of its neighbors~$j$. The \textit{Laplacian} is the matrix $L \defeq \text{diag}(A\1)-A$. This matrix always satisfies $L\1 = \0$, so it has an eigenvalue of zero with corresponding eigenvector $\1$. When the communication network is \textit{connected}, meaning that there is a path between any two agents, there is exactly one zero eigenvalue  \cite{consensus}. When the weights are constructed such that $L^\tp\1 = \0$, the Laplacian is \textit{balanced} \cite{plp}.
The block diagram illustrates the global behavior of the system aggregated over all agents, where the aggregated system and Laplacian are
\[
  \GCON \defeq \bmat{I_n\otimes \Gcon^{11} & I_n\otimes \Gcon^{12} \\
    I_n\otimes \Gcon^{21} & I_n\otimes \Gcon^{22} \Tstrut}
    \quad\text{and}\quad
    \L \defeq L\otimes I_m,
\]
where $m$ is the dimension of the local vectors $v_i$ and $z_i$. For example, one particular (first-order) consensus estimator is given by the iterations $\x^{t+1} = \x^t + L\,(\w^t - \x^t)$ and $\y^t = \w^t - \x^t$, for which $\Gcon$ can be represented using the discrete-time transfer function
\begin{equation}\label{eq:estimator}
  \Gconz(z) = \bmat{1 & \frac{-1}{z-1} \\ 1 & \frac{-1}{z-1} \Tstrut}.
\end{equation}

We are interested in tracking signals with constant mean but potentially \emph{higher-order} deviations from the mean. We define a first-order estimator to have zero steady-state error for constant deviations from the mean, a second-order estimator for ramp deviations, and so on. Similar to the motivation for our definition of optimization methods, we define a \emph{consensus estimator} as a system $\Gcon$ that can successfully track the average when using a baseline set of \emph{easy} Laplacians~$L$.

\begin{definition}\label{def:consensus}
  A system $\Gcon$ is a \emph{consensus estimator of order $\ell$} if, for any connected communication network and associated balanced and diagonalizable Laplacian $L$ with spectral radius sufficiently small and for any signals $w_i^t$ that are polynomials in $t$ of degree $\ell-1$ with constant mean $w^\star \defeq \frac{1}{n}\sum_{j=1}^n w_j^t$, the estimate $y_i^t$ on each agent $i$ converges to the average $w^\star$ as $t\to\infty$.
\end{definition}

If $\Gcon$ is causal and LTI, we can characterize consensus estimators via properties of the transfer function $\Gconz(z)$. It is also typical to assume $\Gcon^{22}$ is strictly causal to avoid circular dependencies in the network transmissions.

\begin{lemma}\label{lem:consensus}
  Suppose $\Gcon$ is a causal LTI system and $\Gcon^{22}$ is strictly causal. For all complex $\lambda\in\complex$, define the map $G_\lambda \defeq \Gcon^{11} + \lambda\,\Gcon^{12} \bigl(I-\lambda\,\Gcon^{22}\bigr)^{-1} \Gcon^{21}$. Then, $\Gcon$ is a consensus estimator of order $\ell$ if and only if the following~hold:
  \begin{enumerate}
    \item[(i)] $\widehat G_\lambda(z)$ is stable for all $\lambda \in \complex$ satisfying $|\lambda| < \delta$ for some $\delta > 0$ sufficiently small.
    \item[(ii)] $\widehat G_0(1)=1$.
    \item[(iii)] $\widehat G_\lambda(z)$ has $\ell$ zeros at $z=1$ for all $\lambda\neq 0$.
    \item[(iv)] $\Gconz(z)$ is proper and $\Gconz^{22}(z)$ is strictly proper.
  \end{enumerate}
\end{lemma}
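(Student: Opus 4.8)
The plan is to decouple the closed-loop network into $n$ independent single-mode systems by diagonalizing the Laplacian, show that the $k$-th mode is governed exactly by $\widehat G_{\lambda_k}(z)$, and then apply the FVT mode by mode. Concretely, after aggregating the agent dynamics and substituting the feedback $\v = \L\z = (L\otimes I_m)\z$, I would eliminate $\z$ using the well-posedness supplied by condition (iv): strict properness of $\Gconz^{22}$ gives $\Gconz^{22}(z)\to 0$ as $z\to\infty$, so $I-\lambda\,\Gcon^{22}$ is invertible as a proper operator, yielding $\z = (I_{nm} - L\otimes \Gcon^{22})^{-1}(I_n\otimes\Gcon^{21})\w$ and $\y = (I_n\otimes\Gcon^{11})\w + (L\otimes\Gcon^{12})\z$. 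Writing $L = S\Lambda S^{-1}$ (possible since $L$ is diagonalizable) and applying $(S^{-1}\otimes I_m)$, the Kronecker identities collapse each factor to $\Lambda\otimes(\cdot)$, so in the transformed coordinates $\tilde\w \defeq (S^{-1}\otimes I_m)\w$ the system block-diagonalizes into the scalar relations $\tilde y_k = \widehat G_{\lambda_k}\,\tilde w_k$.

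The balanced assumption is what lets me read off the consensus goal in these coordinates. I would choose $S$ so that its first column is $\1$ and, because $\1^\tp L = \0$, the first row of $S^{-1}$ is $\tfrac1n\1^\tp$; then the average mode is exactly $\tilde w_1 = \tfrac1n\1^\tp\w = w^\star$, while $S^{-1}\1 = e_1$ shows that $y_i^t\to w^\star$ for every $i$ is equivalent to $\tilde y_1^t\to w^\star$ together with $\tilde y_k^t\to 0$ for $k\geq 2$. For the average mode $\lambda_1 = 0$, the input $\tilde w_1^t\equiv w^\star$ is constant, so stability of $\widehat G_0$ (the $\lambda=0$ instance of (i)) and the FVT give $\tilde y_1^t\to \widehat G_0(1)\,w^\star$, and matching this to $w^\star$ is exactly (ii). For each remaining mode the input $\tilde w_k^t$ is a polynomial of degree $\ell-1$, whose $z$-transform has a pole of order $\ell$ at $z=1$; since a connected graph of small spectral radius puts every nonzero $\lambda_k$ in $\{|\lambda|<\delta\}$, condition (i) makes $\widehat G_{\lambda_k}$ stable, and the only way the product can have its pole at $z=1$ cancelled and converge to $0$ by the FVT is for $\widehat G_{\lambda_k}$ to supply $\ell$ zeros there, which is (iii). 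Assembling these mode-wise statements gives the ``if'' direction.

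For the ``only if'' direction I would run the same computation in reverse and extract each condition from an appropriate test: instability of any mode would make the output diverge, forcing (i); the average and non-average modes force (ii) and the zero count in (iii) as above; and (iv) is just the transfer-function restatement of the standing causality hypothesis. The one delicate step is upgrading ``(iii) holds for the eigenvalues actually realized by admissible graphs'' to ``(iii) holds for all $\lambda\neq 0$.'' Here I would observe that the quantities $\frac{d^j}{dz^j}\widehat G_\lambda(z)\big|_{z=1}$ for $j=0,\dots,\ell-1$ are rational functions of $\lambda$; since one can realize a continuum of small nonzero eigenvalues (for instance by scaling down symmetric connected Laplacians), these rational functions vanish on a set with an accumulation point and hence vanish identically, giving (iii) for every $\lambda\neq 0$. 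Excluding $\lambda=0$ is consistent, since (ii) forces $\widehat G_0(1)=1\neq 0$, i.e.\ no zero at $z=1$ for the average mode.

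I expect the main obstacle to be precisely this necessity argument: producing genuinely balanced, diagonalizable, connected Laplacians of arbitrarily small spectral radius whose eigenvalues sweep out enough of a neighborhood of $0$, and then invoking analyticity in $\lambda$ to pass from those eigenvalues to all $\lambda\neq 0$. The decoupling itself is routine Kronecker algebra and the per-mode convergence is a direct FVT application; the remaining care is in matching the order $\ell$ of the pole created by a degree-$(\ell-1)$ input against the number of zeros of $\widehat G_\lambda$ at $z=1$, and in keeping the properness hypotheses (iv) in force so that $(I-\lambda\,\Gcon^{22})^{-1}$ stays well defined throughout.
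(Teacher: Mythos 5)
Your proposal is correct and follows essentially the same route as the paper: diagonalize the Laplacian (the paper uses left eigenpairs $v^\tp$ of $L$ and the inner products $v^\tp\e$, which is your $S^{-1}\otimes I_m$ change of coordinates), reduce to scalar modes governed by $\widehat G_{\lambda}$, and apply the FVT mode by mode, with the balanced assumption isolating the average mode. Your extra care in the necessity direction --- realizing a set of eigenvalues with an accumulation point and invoking rationality in $\lambda$ to pass from realized Laplacian spectra to all $\lambda$ with $|\lambda|<\delta$ (resp.\ all $\lambda\neq 0$) --- is a gap the paper's proof silently elides, and your treatment of it is sound.
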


\begin{proof}
  We can write the error $e_i^t \defeq y_i^t - w^\star$ succinctly as $\e = (\G_\L-\frac{1}{n}\1\1^\tp)\,\w$, where $\G_\L$ is the closed-loop map from $\w$ to $\y$. Let $(\lambda,v^\tp)$ be a left eigen-pair of $L$. Using that $L\1=\0$, we have that $0 = v^\tp L \1 = \lambda\,(v^\tp \1)$. Thus, $\1^\tp v=0$ for all $v$ corresponding to nonzero $\lambda$. Furthermore, $L^\tp\1=\0$ since the Laplacian is balanced (by assumption). The inner product of an eigenvector with the error is then
  \[
    v^\tp \e = \begin{cases}
    (G_0 - 1) (\1^\tp \w) & \text{if $v=\1$ and $\lambda=0$} \\
    G_\lambda (v^\tp \w) & \text{otherwise.}
    \end{cases}
  \]
  Since the Laplacian is diagonalizable (by assumption), convergence of the error $\e$ is equivalent to convergence of $v^\tp \e$ for each eigenvector $v$. Applying the FVT in the case where $w_i^t$ are polynomials in $t$ of degree $\ell-1$ with constant average $w^\star$, the limit $e_i^t\to 0$ is equivalent to stability of $\widehat G_\lambda$ for all eigenvalues $\lambda$ of the Laplacian $L$ and
  \[
    \lim_{z\to 1} \ \widehat G_0(z)-1= 0
    \quad\text{and}\quad
    \lim_{z\to 1} \ \frac{\widehat G_\lambda(z)}{(z-1)^{\ell-1}} = 0,
  \]
  which correspond to the first three conditions. Causality of $\Gcon$ and strict causality of $\Gcon^{22}$ are equivalent to properness and strict properness of $\Gconz(z)$ and $\Gconz^{22}(z)$, respectively.
\end{proof}

\begin{remark}\label{rem:Ftransform}
  The transfer function of a consensus estimator is not unique. Let $\widehat F$ be any $m\times m$ transfer matrix with full normal rank, where $m$ is the dimension of $v_i$ and $z_i$. Then the closed-loop map $\G_\L:\w\mapsto \y$ is invariant under
  \[
    \Gconz \mapsto \bmat{1 & 0 \\ 0 & \widehat F} \Gconz \bmat{1 & 0 \\ 0 & \widehat F^{-1}},
  \]
  although not all choices of $\widehat F$ preserve causality of $\Gcon$.
\end{remark}

\subsection{Distributed optimization}\label{sec:optcon}

The distributed optimization setting is conceptually a combination of the optimization and consensus settings. There are $n$ agents that can communicate over a network, agent $i$ has access to the gradient of its local function $\grad f_i$, and the goal is for all agents to achieve consensus on an extremum of the sum of all functions $f_1+\dots+f_n$. Distributed optimization algorithms take the following general form \cite{sundararajan_allerton,SVL}; see Section~\ref{sec:examples} for specific examples from the literature.

\noindent\begin{minipage}{0.55\linewidth}
  \vspace{\algorithmverticalspace}
  \centering\includegraphics{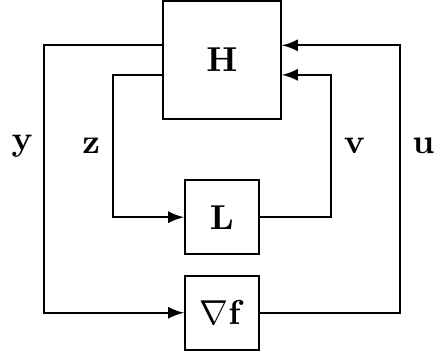}
  \vspace{\algorithmverticalspace}
\end{minipage}\hfill%
\begin{minipage}{0.45\linewidth}
  \begin{align*}
    \bmat{y_i \\ z_i} &= H \bmat{u_i \\ v_i}, \\[5pt]
    u_i &= \grad  f_i(y_i), \\[5pt]
    v_i &= \sum_{j=1}^n a_{ij}\,(z_i-z_j)
  \end{align*}
\end{minipage}

\noindent Similar to the consensus and optimization settings, we have 
\[
  \H \defeq
    \addtolength{\arraycolsep}{-1mm}
    \bmat{I_n\otimes H^{11} & I_n\otimes H^{12} \\
    I_n\otimes H^{21} & I_n\otimes H^{22}}\!,\quad
    \begin{aligned}
    \L &\defeq L\otimes I_m,\\
    \df &\defeq\diag(\grad f_1,\dots,\!\grad f_n).
    \end{aligned}
\]
We define a distributed optimization algorithm as follows.

\begin{definition}\label{def:distrop}
  A system $H$ is a \emph{distributed optimization algorithm} if for any connected communication network and associated balanced Laplacian $L$ with spectral radius sufficiently small, and for all $\epsilon>0$ sufficiently small and for all $y_{i}^\star$, the feedback interconnection of $\H$ with $\L$ and $\df$ satisfies
  $y_i^t \to y^{\star} \defeq \frac{1}{n}\sum_{j=1}^n y_j^\star$ as $t\to\infty$ for all $i$,  where $f_i(y) \defeq \frac{\epsilon}{2} \norm{y-y_i^\star}^2$. We also require that all $y_i^t$ converge to a common constant limit when $f_i\equiv 0$ for all $i$.
\end{definition}

If $H$ is causal and LTI, we can characterize consensus estimators via properties of the transfer function $\widehat H(z)$. We will also assume causality of certain maps to ensure that the algorithm is implementable. In particular, $H$ should be causal, and there should be no circular dependencies in the network transmissions or gradient evaluations. This means that the partial closed-loop map $H^{22} + \epsilon H^{21}(I - \epsilon H^{11})^{-1} H^{12}$ should be strictly causal, which is equivalent to both $H^{22}$ and $H^{21} H^{12}$ being strictly causal.

\begin{lemma}\label{lem:distrop}
  Suppose $H$ is a causal LTI system, and $H^{22}$ and $H^{21}H^{12}$ are strictly causal. For all $\lambda\in\complex$, define the map
  \[
    H_\lambda \defeq H^{11} + \lambda H^{12} \bigl(I-\lambda H^{22}\bigr)^{-1}H^{21}.
  \]
  The system $H$ is a distributed optimization algorithm if and only if the following hold:
  \begin{enumerate}
  \item[(i)] The zeros of $1-\epsilon \widehat H_\lambda(z)$ are inside the unit circle for all $\epsilon > 0$ sufficiently small and for all $\lambda\in\complex$ satisfying $|\lambda| < \delta$ for some $\delta > 0$ sufficiently small.
  \item[(ii)] $\widehat H_0(z)$ has a pole at $z=1$ and $(z-1)\,\widehat H_0(z)$ is stable.
  \item[(iii)] $\widehat H_\lambda(z)$ is stable and has a zero at $z=1$ for all $\lambda\neq 0$.
  \item[(iv)] $\widehat H(z)$ is proper and both $\widehat H^{22}(z)$ and $\widehat H^{21}(z)\,\widehat H^{12}(z)$ are strictly proper.
  \end{enumerate}
\end{lemma}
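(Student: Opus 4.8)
The plan is to reduce the distributed problem to the optimization and consensus analyses already in hand, exploiting the crucial fact that the test gradients act as a \emph{uniform scalar}. Substituting $f_i(y)=\frac{\epsilon}{2}\norm{y-y_i^\star}^2$ gives $\u=\epsilon(\y-\y^\star)$, so the feedback $\df$ is just multiplication by $\epsilon$ and therefore commutes with any change of basis in the agent index; meanwhile $\H=I_n\otimes H$ commutes with $V\otimes I_m$ for every invertible $V$. Hence, after writing the balanced connected Laplacian as $L=V\Lambda V^{-1}$ with eigenvalues $\lambda_1=0,\lambda_2,\dots,\lambda_n$ (all nonzero ones small, since the spectral radius is small), the closed loop decouples into $n$ subsystems parameterized by the scalar $\lambda_k$. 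I would first treat diagonalizable $L$; for a general balanced $L$ the same decoupling holds blockwise via the Jordan form, the Jordan structure contributing only polynomial-in-$t$ factors that are absorbed once the per-$\lambda$ conditions below hold.

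For each mode I would eliminate the loops in sequence. Closing the network loop $\nu=\lambda_k\zeta$ yields the map $\eta=H_{\lambda_k}\mu$ (here strict causality of $H^{22}$, assumed in the hypotheses, makes $(I-\lambda H^{22})^{-1}$ well posed), and then closing the gradient loop $\mu=\epsilon(\eta-\eta^\star)$ gives $\eta_k=\frac{-\epsilon H_{\lambda_k}}{1-\epsilon H_{\lambda_k}}\,\eta_k^\star$, which is exactly the closed-loop form analyzed in \Cref{lem:opt} but with $H_{\lambda_k}$ replacing $\Gopt$. Projecting onto left eigenvectors and using balancedness translates the target into per-mode statements: since $L^\tp\1=\0$ forces $\1^\tp v_k=0$ for $\lambda_k\neq0$, and normalizing the left eigenvector of $\lambda_1=0$ as $w_1=\1/n$ gives $\eta_1^\star=w_1^\tp\y^\star=y^\star$, the requirement $y_i^t\to y^\star=\frac1n\sum_j y_j^\star$ becomes $\eta_1^t\to y^\star$ for the average mode and $\eta_k^t\to0$ for every disagreement mode.

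Next I would apply the FVT mode by mode and read off the four conditions. For the average mode ($\lambda=0$, $\widehat H_0=\widehat H^{11}$), convergence to its own average for all small $\epsilon>0$ together with convergence to a constant at $\epsilon=0$ is, by \Cref{lem:opt}, exactly the statement that $\widehat H_0$ is an optimization method, i.e.\ condition~(i) specialized to $\lambda=0$, condition~(ii), and properness from~(iv). For a disagreement mode, $\eta_k^t\to0$ with arbitrary constant $\eta_k^\star$ requires, via the FVT, both closed-loop stability of $T_{\lambda_k}\defeq\frac{-\epsilon\widehat H_{\lambda_k}}{1-\epsilon\widehat H_{\lambda_k}}$ and $T_{\lambda_k}(1)=0$; the latter is $\widehat H_{\lambda_k}(1)=0$, the zero at $z=1$ in~(iii), while the forced-response stability comes from the zeros of $1-\epsilon\widehat H_\lambda$ lying in the disk (condition~(i)) and the $\epsilon=0$ autonomous decay comes from stability of $\widehat H_\lambda$ (condition~(iii)). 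Causality and implementability supply condition~(iv), matching the standing hypotheses and the discussion preceding the lemma.

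Sufficiency is then just running this argument forward for every admissible $L$; necessity is where I expect the real work, and the main obstacle. The definition only guarantees convergence for balanced connected Laplacians of small spectral radius, so I can directly probe only those eigenvalues $\lambda$ realized by such graphs, yet conditions~(i) and~(iii) quantify over a full punctured neighborhood of the origin and over all $\lambda\neq0$. The remedy I would use is twofold: first, $\widehat H_\lambda(z)$ is rational in $\lambda$, so the algebraic identity $\widehat H_\lambda(1)=0$, once established on an accumulating set of realizable $\lambda$, propagates to all $\lambda\neq0$; second, ranging over balanced digraphs (whose spectra can be complex and fill a neighborhood of $0$) and rescaling the spectral radius makes the realizable $\lambda$ dense in a punctured neighborhood of the origin, which is enough for the open conditions~(i) and~(iii) on $|\lambda|<\delta$. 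A secondary subtlety worth flagging is internal versus input--output stability: the pole of $\widehat H_\lambda$ is cancelled in $T_{\lambda_k}$, so it is invisible to the forced response, which is precisely why stability of $\widehat H_\lambda$ must be imposed directly in~(iii) (it governs the $\epsilon=0$ autonomous modes) rather than being deducible from~(i) alone.
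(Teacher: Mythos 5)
Your proposal is correct and follows essentially the same route as the paper: diagonalize the Laplacian to decouple the closed loop into scalar modes $\eta_k=\frac{-\epsilon \widehat H_{\lambda_k}}{1-\epsilon\widehat H_{\lambda_k}}\eta_k^\star$, use balancedness to separate the average mode from the disagreement modes, and apply the FVT mode by mode to read off conditions (i)--(iv). The paper's proof is terser and does not explicitly address the points you flag in the necessity direction (which $\lambda$ are realizable by admissible graphs, and the Jordan-form case), so your added care there is a refinement rather than a divergence.
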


\begin{proof}
  Let $\H_\L$ be the partial closed-loop map from $\u$ to $\y$ after we eliminate $\z$ and $\v$. Substituting the given $f_i$ and eliminating $\u$, we obtain the closed-loop dynamics $\y = -\epsilon \H_\L(I - \epsilon \H_\L)^{-1} \y^\star$. The condition $y_i^t \to \frac{1}{n}\sum_{j=1}^n y_j^\star$ can be written succinctly as $\y^t \to \frac{1}{n}\1\1^\tp \y^\star$. Diagonalizing the closed-loop dynamics as in the proof of Lemma~\ref{lem:consensus} and applying the FVT, we find that $y_i^t$ converging is equivalent to the map $-\epsilon \widehat H_\lambda(z)(I-\epsilon\widehat H_\lambda(z))^{-1}$ being stable, which is equivalent to~(i). Again from the FVT, $\1^\tp \y^t \to \1^\tp \y^\star$ means $H_0(z)$ has a pole at $z=1$, and convergence to a constant in the case $f_i\equiv 0$ means $(z-1)\,\widehat H_0(z)$ is stable, so we have (ii). As in the proof of Lemma~\ref{lem:consensus}, we have $v^\tp \1 = 0$ and $v^\tp \y^t \to 0$ for all $v$ corresponding to $\lambda\neq 0$, so $\widehat H_\lambda(z)$ has a zero at $z=1$, and for the case $f_i\equiv 0$, we have that $\widehat H_\lambda(z)$ is stable, which is equivalent to (iii). Item (iv) is equivalent to the causality assumptions.
\end{proof}

\section{Universal decomposition}

We now state our main result, which states that every distributed optimization algorithm can be decomposed into consensus and optimization components as in Figure~\ref{fig:decomp}.

\begin{theorem}\label{thm:1}
  Let $H$ be a distributed optimization algorithm satisfying the conditions of Lemma~\ref{lem:distrop}. There exists an optimization method $\Gopt$ and a second-order consensus estimator $\Gcon$ such that
  \begin{equation}\label{Htrans}
  H = \Gcon \bmat{\Gopt & 0 \\ 0 & I_m}.
  \end{equation}
  If $H^{11}$ is strictly causal, then $\Gopt$ can be chosen to be strictly causal as well.
\end{theorem}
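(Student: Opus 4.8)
The plan is to pass to transfer functions and observe that the claimed identity $\widehat{H} = \widehat{G}_{\mathrm{con}}\,\bmat{\widehat{G}_{\mathrm{opt}} & 0 \\ 0 & I_m}$ decouples block-by-block into $\widehat{G}_{\mathrm{con}}^{12}=\widehat{H}^{12}$, $\widehat{G}_{\mathrm{con}}^{22}=\widehat{H}^{22}$, together with the two \emph{common-factor} equations $\widehat{H}^{11}=\widehat{G}_{\mathrm{con}}^{11}\,\widehat{G}_{\mathrm{opt}}$ and $\widehat{H}^{21}=\widehat{G}_{\mathrm{con}}^{21}\,\widehat{G}_{\mathrm{opt}}$. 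First I would record the algebraic consequence that this forces $\widehat{H}_\lambda = \widehat{G}_\lambda\,\widehat{G}_{\mathrm{opt}}$ for every $\lambda$, where $\widehat{G}_\lambda$ is the consensus map of Lemma~\ref{lem:consensus}. This is the conceptual heart of the argument: the whole construction reduces to extracting a \emph{single scalar} factor $\widehat{G}_{\mathrm{opt}}$, independent of $\lambda$, from the family $\widehat{H}_\lambda$, so that $\widehat{G}_{\mathrm{opt}}$ is an optimization method and the quotients $\widehat{G}_\lambda=\widehat{H}_\lambda/\widehat{G}_{\mathrm{opt}}$ assemble into a second-order consensus estimator.

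Next I would build $\widehat{G}_{\mathrm{opt}}$ from the behavior of $\widehat{H}_0=\widehat{H}^{11}$ at $z=1$. By Lemma~\ref{lem:distrop}(ii) this has a simple pole there; let $r_0\defeq\lim_{z\to1}(z-1)\,\widehat{H}^{11}(z)\neq0$ be its (real) residue. A short argument shows $r_0<0$: near $z=1$ the zero of $1-\epsilon\,\widehat{H}_0$ sits at $z\approx 1+\epsilon\,r_0$, and Lemma~\ref{lem:distrop}(i) at $\lambda=0$ forces this inside the unit circle for small $\epsilon>0$. I would then set $\widehat{G}_{\mathrm{opt}}(z)=\frac{r_0\,z}{z-1}$ and verify the three conditions of Lemma~\ref{lem:opt}: it has a single pole at $z=1$ with $(z-1)\,\widehat{G}_{\mathrm{opt}}=r_0 z$ stable, it is proper, and since $r_0<0$ the zero of $1-\epsilon\,\widehat{G}_{\mathrm{opt}}$ at $z=\frac{1}{1-\epsilon r_0}$ lies inside the unit circle for small $\epsilon>0$. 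Crucially $\widehat{G}_{\mathrm{opt}}$ is minimum phase (its only zero is at $z=0$), which is what keeps the quotients well behaved.

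I would then verify via Lemma~\ref{lem:consensus} that $\widehat{G}_{\mathrm{con}}$, defined blockwise as above, is a second-order consensus estimator. Condition (ii) holds because the residues match: $\widehat{G}_0(1)=\widehat{G}_{\mathrm{con}}^{11}(1)=\lim_{z\to1}(z-1)\widehat{H}^{11}(z)\big/\big((z-1)\widehat{G}_{\mathrm{opt}}(z)\big)=r_0/r_0=1$. Condition (iii) is where the two tasks interlock: $\widehat{H}_\lambda$ contributes one zero at $z=1$ by Lemma~\ref{lem:distrop}(iii) and the pole of $\widehat{G}_{\mathrm{opt}}$ contributes a second, so $\widehat{G}_\lambda$ has a double zero at $z=1$; the ``extra'' zero demanded by a \emph{second}-order estimator is supplied precisely by the integrator of the optimization method. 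Condition (i) follows since $\widehat{H}_\lambda$ is stable for $\lambda\neq0$ while dividing by the minimum-phase $\widehat{G}_{\mathrm{opt}}$ introduces only the stable pole at $z=0$, and at $\lambda=0$ the pole of $\widehat{H}^{11}$ at $z=1$ cancels so that $\widehat{G}_0=\widehat{G}_{\mathrm{con}}^{11}$ is stable.

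The step I expect to be the main obstacle is the properness/relative-degree bookkeeping of $\widehat{G}_{\mathrm{con}}^{11}=\widehat{H}^{11}/\widehat{G}_{\mathrm{opt}}$ and $\widehat{G}_{\mathrm{con}}^{21}=\widehat{H}^{21}/\widehat{G}_{\mathrm{opt}}$ demanded by Lemma~\ref{lem:consensus}(iv). With the relative-degree-zero choice above, both quotients inherit the relative degrees of $\widehat{H}^{11}$ and $\widehat{H}^{21}$, so properness is immediate and the general claim follows. For the refinement, when $\widehat{H}^{11}$ is strictly proper I would instead take $\widehat{G}_{\mathrm{opt}}(z)=\frac{r_0}{z-1}$, which is strictly proper and keeps $\widehat{G}_{\mathrm{con}}^{11}$ proper; the only delicate case is when $\widehat{H}^{21}$ is merely proper, where raising the relative degree of $\widehat{G}_{\mathrm{opt}}$ threatens to make $\widehat{G}_{\mathrm{con}}^{21}$ improper. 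There I would invoke the realization freedom of Remark~\ref{rem:Ftransform}, multiplying the lower channel by $\widehat{F}(z)=z^{-1}I_m$ to restore properness of $\widehat{G}_{\mathrm{con}}^{21}$ without altering any closed-loop map $\widehat{G}_\lambda$, using that strict properness of $\widehat{H}^{21}\widehat{H}^{12}$ then forces $\widehat{H}^{12}$ strictly proper so that the transformed upper channel remains proper and $\widehat{G}_{\mathrm{con}}^{22}$ stays strictly proper.
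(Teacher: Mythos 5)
Your proof is correct, but it takes a genuinely different route from the paper's. The paper chooses $\Goptz(z) = z^p\,\widehat\Phi(z)\,\widehat H^{11}(z)$, i.e.\ it factors out (essentially all of) $\widehat H^{11}$ as the optimization method, which is the natural ``network-free'' method and is what the examples in Section~\ref{sec:examples} use; the price is the all-pass machinery $\widehat\Phi$ needed so that the zeros of $\widehat H^{11}$, which become poles of $\Gconz^{21}=\widehat H^{21}(\widehat H^{11})^{-1}\widehat\Phi^{-1}$, do not destroy stability, plus the $z^{\pm p},z^{\pm q}$ bookkeeping for properness. You instead extract only the integrator with the correct residue, $\Goptz(z)=\frac{r_0 z}{z-1}$ with $r_0=\lim_{z\to1}(z-1)\widehat H^{11}(z)<0$ (your root-locus argument for the sign is sound, using that Lemma~\ref{lem:distrop}(ii) forces the pole at $z=1$ to be simple), and push all remaining dynamics of $\widehat H^{11}$ into $\Gconz^{11}$. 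Both constructions satisfy the existence claim, since the key identity $\widehat H_\lambda=\widehat G_\lambda\,\Goptz$ and the zero/pole counting at $z=1$ (one zero from $\widehat H_\lambda$, one from cancelling the integrator) work identically. Your version is arguably more robust: because $\Goptz$ is biproper and minimum-phase by construction, division never introduces problematic poles, and in particular you avoid the delicate case where $\widehat H^{11}$ has zeros exactly on the unit circle, for which the paper's $\widehat\Phi(z)^{-1}$ would place a pole on the unit circle. Your handling of the strictly causal refinement via $\widehat F(z)=z^{-1}I$ and the rank-one outer-product observation (strict properness of $\widehat H^{21}\widehat H^{12}$ forces one factor to be strictly proper) is also correct and matches the role of $q$ in the paper. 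The trade-off is that your decomposition is less informative: $\Gopt$ is always the proximal-point (or gradient) method, so the algorithm-specific optimization dynamics end up hidden inside the consensus estimator, whereas the paper's choice recovers the decompositions tabulated for DIGing, EXTRA, SVL, etc.
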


\begin{proof}
  From conditions (i), (ii), and (iv) of Lemma~\ref{lem:distrop}, $\widehat H_0(z)$ has a pole at $z=1$ and is proper, $(z-1)\,\widehat H_0(z)$ is stable, and the zeros of $1-\epsilon\,\widehat H_0(z)$ are inside the unit circle for all $\epsilon > 0$ sufficiently small. Then from Lemma~\ref{lem:opt}, $H_0 = H^{11}$ is an optimization method. If $\widehat H^{11}(z)$ is non-minimum phase (has zeros on or outside the unit circle), then $\Goptz(z) \defeq z^p \prod\left( \frac{1-\bar z_0 z}{z-z_0}\right) \widehat H^{11}(z)$, where the product is over all such zeros $z_0$, will also satisfy the conditions of Lemma~\ref{lem:opt}, provided $p$ is at most the relative degree of $\widehat H^{11}(z)$. This follows because $\Goptz(z)$ is still proper, still has a pole at $z=1$, and because each factor multiplying $\widehat H^{11}(z)$ is an all-pass filter with nonnegative phase (phase lead), which therefore can only increase stability margins and preserves the stability requirement.

  Set $\Goptz(z) = z^p\,\widehat\Phi(z)\,\widehat H^{11}(z)$, where $\widehat\Phi(z) \defeq \prod\frac{1-\bar z_0 z}{z-z_0}$ is the product of all-pass factors that cancel the non-minimum phase zeros of $\widehat H^{11}(z)$. Then, invert the transformation \eqref{Htrans} and apply Remark~\ref{rem:Ftransform} using $\widehat F(z) = z^{-q} I$ to obtain
  \[
    \Gconz(z) = \bmat{ z^{-p}\,\widehat\Phi(z)^{-1} & z^{q}\,\widehat H^{12}(z) \\ z^{-p-q}\,\widehat H^{21}(z)\,\widehat H^{11}(z)^{-1}\,\widehat\Phi(z)^{-1} & \widehat H^{22}(z)}.
  \]
  Since $\widehat H^{11}(z)$ is proper and $\widehat H^{21}(z)\,\widehat H^{12}(z)$ is strictly proper, we can always ensure $\Gconz(z)$ will be proper by letting $p$ and $q$ be the relative degrees of $\widehat H^{11}(z)$ and $\widehat H^{12}(z)$, respectively. This choice leads to a $\Goptz(z)$ that has relative degree zero. However, when $\widehat H^{11}(z)$ is strictly proper, we can reduce $p$ by~$1$, which ensures that $\Goptz(z)$ is strictly proper as well.

  To verify that $\Gcon$ is a consensus estimator of order two, we can compute $G_\lambda$ as defined in Lemma~\ref{lem:consensus} and see that $\widehat G_\lambda(z) = \bigl( z^p \widehat H^{11}(z)\,\widehat\Phi(z) \bigr)^{-1} \widehat H_\lambda(z)$, where $H_\lambda$ is defined in Lemma~\ref{lem:distrop}. We can now verify the properties in Lemma~\ref{lem:consensus}. When $\lambda=0$, the transfer function is $\widehat G_0(z) = z^{-p}\,\widehat\Phi(z)^{-1}$, which is stable and satisfies $\widehat G_0(1)=1$ since $\widehat\Phi$ is all-pass. When $\lambda \neq 0$, the fact that $\widehat H_\lambda(z)$ has a zero at $z=1$ and $\widehat H_0(z) = \widehat H^{11}(z)$ has a pole at $z=1$ implies that $\widehat G_\lambda(z)$ has two zeros at $z=1$. To verify stability when $\lambda \neq 0$, stability of $\widehat G_\lambda(z)$ follows from stability of $\widehat H_\lambda(z)$ and $\widehat\Phi(z)^{-1}$.
\end{proof}

We can also prove a partial converse; under certain mild technical conditions, combining consensus and optimization components as in Figure~\ref{fig:decomp} yields a distributed optimization algorithm.

\begin{theorem}\label{thm:2}
  Suppose $\Gopt$ is a causal SISO LTI optimization method, $\Gcon$ is a causal LTI second-order consensus estimator, and further assume
  \begin{itemize}
    \item $\Goptz(z)$ and $\widehat G_\lambda(z)$ are minimum-phase, meaning all zeros are strictly inside the unit circle, and
    \item either $\Goptz(z)$ or $\Gconz^{21}(z)\,\Gconz^{12}(z)$ is strictly proper.
  \end{itemize}
  Then, the combined system $H$ given in \eqref{Htrans} is a distributed optimization algorithm.
\end{theorem}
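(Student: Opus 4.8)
The plan is to verify the four conditions of Lemma~\ref{lem:distrop} for the combined system $H = \Gcon \bmat{\Gopt & 0 \\ 0 & I_m}$, using the hypotheses that $\Gopt$ is an optimization method (Lemma~\ref{lem:opt}) and $\Gcon$ is a second-order consensus estimator (Lemma~\ref{lem:consensus}). The first step is to compute the block entries of $\widehat H(z)$ from the matrix product: since $\Gopt$ is SISO and multiplies only the first input channel, we get $\widehat H^{11} = \Gconz^{11}\Goptz$, $\widehat H^{12} = \Gconz^{12}$, $\widehat H^{21} = \Gconz^{21}\Goptz$, and $\widehat H^{22} = \Gconz^{22}$. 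Substituting these into the definition $H_\lambda = H^{11} + \lambda H^{12}(I-\lambda H^{22})^{-1}H^{21}$ and factoring out $\Goptz$ on the right yields the clean relation $\widehat H_\lambda(z) = \widehat G_\lambda(z)\,\Goptz(z)$, where $\widehat G_\lambda$ is exactly the map from Lemma~\ref{lem:consensus}. This factorization is the crux that lets me translate each condition on $H$ into a condition already guaranteed by the optimization and consensus hypotheses.

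With this relation in hand I would check the conditions in order. For condition~(iv), properness of $\widehat H$ follows from properness of $\Goptz$ and $\Gconz$; strict properness of $\widehat H^{22} = \Gconz^{22}$ is immediate from the consensus estimator hypothesis; and strict properness of $\widehat H^{21}\widehat H^{12} = \Gconz^{21}\Gconz^{12}\Goptz$ follows from the second bulleted assumption (either $\Goptz$ or $\Gconz^{21}\Gconz^{12}$ is strictly proper). For condition~(iii), $\widehat H_\lambda = \widehat G_\lambda\Goptz$ is a product of two stable functions (for small $|\lambda|$, using Lemma~\ref{lem:consensus}(i) and the stability built into the optimization method), and it inherits a zero at $z=1$ from the double zero of $\widehat G_\lambda$ at $z=1$ (Lemma~\ref{lem:consensus}(iii) with $\ell=2$), which is not cancelled by the pole of $\Goptz$ at $z=1$—one of the two zeros survives. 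For condition~(ii), at $\lambda=0$ we have $\widehat H_0 = \widehat G_0\Goptz$ with $\widehat G_0(1)=1$ (Lemma~\ref{lem:consensus}(ii)), so $\widehat H_0$ inherits the pole at $z=1$ from $\Goptz$ and $(z-1)\widehat H_0$ is stable.

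The main obstacle will be condition~(i): showing the zeros of $1-\epsilon\widehat H_\lambda(z)$ lie strictly inside the unit circle for all sufficiently small $\epsilon>0$ and all sufficiently small $|\lambda|$. The difficulty is that this is a \emph{joint} smallness condition in two parameters, and I cannot simply invoke the one-parameter results of Lemmas~\ref{lem:opt} and~\ref{lem:consensus} separately. My plan is a perturbation argument: at $\lambda=0$, condition~(i) reduces to the zeros of $1-\epsilon\widehat H_0(z) = 1-\epsilon\widehat G_0(z)\Goptz(z)$ being inside the disk, which is precisely the optimization-method guarantee (since $\widehat G_0 = z^{-p}\widehat\Phi^{-1}$ is minimum-phase and all-pass-like, $\widehat G_0\Goptz$ is again an optimization method up to the minimum-phase hypothesis). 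I then treat the zeros as continuous functions of $\lambda$ and $\epsilon$ and argue that they remain inside the unit disk on a small enough neighborhood. The minimum-phase assumptions on $\Goptz$ and $\widehat G_\lambda$ are what prevent spurious zeros from escaping to or outside the unit circle under the product structure, and the strict-properness assumption controls the behavior of the zeros at infinity; together these close the argument.
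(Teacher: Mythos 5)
Your proposal is correct and takes essentially the same route as the paper: both verify the four conditions of Lemma~\ref{lem:distrop} via the key factorization $\widehat H_\lambda(z) = \widehat G_\lambda(z)\,\Goptz(z)$, with identical treatments of (ii)--(iv), and your perturbation argument for (i) is just a rephrasing of the paper's one-line root-locus argument (both are equally informal about the joint smallness in $\epsilon$ and $\lambda$, which you at least flag explicitly). One small correction: the identity $\widehat G_0(z) = z^{-p}\,\widehat\Phi(z)^{-1}$ belongs to the constructive proof of Theorem~\ref{thm:1} and does not hold for the arbitrary second-order consensus estimator assumed here; what you actually need for the root departing $z=1$ to move inward --- and what Lemma~\ref{lem:consensus} does supply --- is only that $\widehat G_0(z)$ is stable with $\widehat G_0(1)=1$, so the slip does not break the argument.
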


\begin{proof}
  We will verify the properties of Lemma~\ref{lem:distrop}. Since $H_\lambda = G_\lambda\,\Gopt$ and $G_\lambda$ is stable, $\Goptz(z)$ has a single pole at $z=1$ and there are no zeros outside the unit circle, the root locus will be stable for small gains, so (i) holds. When $\lambda=0$, $\widehat H_0(z) = \widehat G_0(z)\,\Goptz(z)$. Since $\widehat G_0(1)=1$ and $\Goptz(z)$ has a pole at $z=1$ and $(z-1)\Goptz(z)$ is stable, we have (ii). When $\lambda \neq 0$, $\widehat G_\lambda(z)$ has two zeros at $z=1$ and $\widehat G_0(z)$ has a single pole at $z=1$, therefore $\widehat H_\lambda(z)$ has a zero at $z=1$ and (iii) holds. Now we examine properness. Note that $\widehat H^{21}(z)\,\widehat H^{12}(z) = \Gconz^{21}(z)\,\Gconz^{12}(z)\,\Goptz(z)$, so strict properness of either term on the right-hand side implies strict properness of the left-hand side. Finally, properness of $\Gconz(z)$ and $\Goptz(z)$ imply properness of $\widehat H(z)$, and strict properness of $\Gconz^{22}(z)$ implies strict properness of $\widehat H^{22}(z)$, so (iv) holds.
\end{proof}

\begin{figure}[b]
  \centering\includegraphics{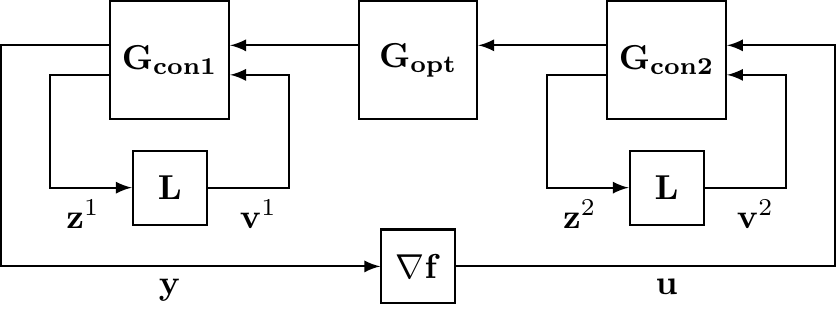}
  \caption{Factored form of an algorithm, where the second-order consensus estimator factors into two first-order SISO estimators; this is the form proposed in \cite{shuo}.}
  \label{fig:factored}
\end{figure}

\begin{remark}
  The continuous-time analog of gradient-based optimization methods are called \emph{gradient flows}, and there has been recent interest in studying iterative algorithms in the continuous limit \cite{su2014differential}. Likewise, consensus methods are often analyzed in continuous time \cite{consensus}. The decomposition described in Theorems~\ref{thm:1}--\ref{thm:2} was developed for discrete-time distributed optimization algorithms, but an analogous decomposition exists for continuous-time systems. In this case, a distributed optimization algorithm would separate into a gradient flow and a continuous-time consensus estimator.
\end{remark}

\subsection{Factoring the consensus estimator}\label{sec:factoring}

The decomposition in Figure \ref{fig:decomp} is not internally stable. While the average gradient is zero at the optimizer, the gradient of each agent is not necessarily zero. This nonzero constant is integrated by the optimization method to produce an unbounded output. This can be fixed, however, if the consensus estimator \textit{factors} into two first-order estimators.

Suppose $\Gcon$ factors as $\Gconleft\Gconright$, where $\Gconleft$ and $\Gconright$ are both first-order estimators. The optimization method and both consensus estimators are SISO LTI systems and therefore commute, so we can swap the order of $\Gopt$ and $\Gconright$ to obtain the diagram in Figure~\ref{fig:factored}. While this does not change the map from $\u$ to $\y$, it does change the realization; the steady-state input to the optimization method is now the average gradient, which is zero at optimality.

To check whether or not a consensus estimator factors, we equate a second-order estimator $\Gcon$ with its factorization $\Gconleft\Gconright$ to find that
\[
  \Gcon = \left[\begin{array}{c:cc}
    \Gconleft^{11}\,\Gconright^{11} & \Gconleft^{12} & \Gconleft^{11}\,\Gconright^{12} \Bstrut\\ \hdashline
    \Gconleft^{21}\,\Gconright^{11} & \Gconleft^{22} & \Gconleft^{21}\,\Gconright^{12} \Tstrut\\
    \Gconright^{21} & 0 & \Gconright^{22} \Tstrut
  \end{array}\right],
\]
where $(z^1,v^1)$ are the transmitted and received variables for $\Gconleft$, and similarly for $\Gconright$. The inputs to the combined system are then $(u,v^1,v^2)$, and the outputs are $(y,z^1,z^2)$. Note that the transmitted variables $v^1$ and $v^2$ need not have the same dimension. The consensus estimator has this form if and only if $\Gcon^{32}$ is zero and its components factor as
\[
  \bmat{\Gcon^{11} & \Gcon^{13} \\ \Gcon^{21} & \Gcon^{23} \Tstrut} = \bmat{\Gconleft^{11} \\ \Gconleft^{21} \Tstrut} \bmat{\Gconright^{11} & \Gconright^{12}},
\]
which is the case if and only if ${\Gcon^{11}\,\Gcon^{23} - \Gcon^{13}\,\Gcon^{21} = 0}$. Whether an estimator factors or not depends on the transfer function $\Gcon$ which is not unique, so we may need to first apply the transformation in Remark~\ref{rem:Ftransform} with a suitable transfer function $\widehat F$ for an estimator to factor.

\subsection{Decomposition of known algorithms}\label{sec:examples}

To illustrate our results, we first describe our decomposition technique on a well-known distributed optimization algorithm. We then state the decomposition for many other algorithms from the literature.

\subsubsection{DIGing}

We first illustrate our results on the DIGing algorithm~\cite{DIGing,LiNa}, which is described by the iterations
\begin{align*}
  \x^{t+1} &= W\x^t - \alpha\,\y^t, \\
  \y^{t+1} &= W\y^t + \df(\x^{t+1}) - \df(\x^t),
\end{align*}
where $\alpha>0$ is the stepsize and the gossip matrix $W$ is related to the graph Laplacian as $W = I-L$. This algorithm requires each agent to communicate $m=2$ variables at each iteration, and the associated transfer function is
\[
  \widehat H(z) = \bmat{\frac{-\alpha}{z-1} & \frac{-z}{z-1} & \frac{-\alpha z}{(z-1)^2} \\
  \frac{-\alpha}{z\,(z-1)} & \frac{-1}{z-1} & \frac{\alpha}{(z-1)^2} \Tstrut\\
  \frac{1}{z} & 0 & \frac{-1}{z-1} \Tstrut}.
\]
Choose the optimization method as ${\Goptz(z) = \widehat H^{11}(z) = \frac{-\alpha}{z-1}}$. Then applying the transformation in Remark~\ref{rem:Ftransform} with the transfer matrix $\widehat F(z) = \text{diag}\bigl(z,\frac{-\alpha z}{z-1}\bigr)$, the consensus estimator transforms as
\[
  \Gconz(z) = \bmat{1 & \frac{-z}{z-1} & \frac{\alpha z}{(z-1)^2} \\
    \frac{1}{z} & \frac{-1}{z-1} & \frac{\alpha}{(z-1)^2} \Tstrut\\
    \frac{z-1}{-\alpha z} & 0 & \frac{-1}{z-1} \Tstrut}
    \mapsto
  \bmat{1 & \frac{-1}{z-1} & \frac{-1}{z-1} \\
    1 & \frac{-1}{z-1} & \frac{-1}{z-1} \Tstrut\\
    1 & 0 & \frac{-1}{z-1} \Tstrut}.
\]
The estimator on the right satisfies the conditions to factor in Sec.~\ref{sec:factoring}; we chose the transformation matrix such that this is the case. Since ${\Gcon^{11} = 1}$, we can choose $\Gconleft^{11} = 1 = \Gconright^{11}$, which results in the factorization $\Gcon = \Gconleft\,\Gconright$, where both factors are the first-order estimator in \eqref{eq:estimator}.

The analysis for all other algorithms in this section is similar. In each case, we choose the optimization algorithm as $\Gopt = H^{11}$ so that $\Gcon^{11} = 1$. In addition, we apply the transformation in Remark~\ref{rem:Ftransform} to put the estimators in a similar form with $\Gcon^{21}=1$ for comparison.

\subsubsection{Non-accelerated algorithms}\label{sec:non-accelerated}

We first consider algorithms that use standard gradient descent for the optimization method: $\Goptz(z) = \frac{-\alpha}{z-1}$ where $\alpha>0$ is the stepsize. Several such algorithms have been proposed whose consensus estimator factors (see Section~\ref{sec:factoring}). In particular, each factor is typically one of the following first-order estimators:
\[
  \Gconleftz(z),\,\Gconrightz(z) = \bmat{1 & \frac{-1}{z-1} \\ 1 & \frac{-1}{z-1} \Tstrut} \quad\text{or}\quad
  \bmat{1 & \frac{-z}{z-1} \\ 1 & \frac{-1}{z-1} \Tstrut}.
\]
Every combination of these factors has been proposed in the literature: DIGing \cite{DIGing,LiNa} uses the estimator on the left for both factors, $\mathcal{AB}$ \cite{AB} uses one of each\footnote{The $\mathcal{AB}$ method is described in terms of two gossip matrices $\mathcal{A}$ and $\mathcal{B}$, where the Laplacian is $L = I-\mathcal{A} = I-\mathcal{B}$.}, and AugDGM \cite{AugDGM} uses the one on the right for both factors.

Not every algorithm uses a consensus estimator that factors into two first-order estimators. To check whether or not an algorithm factors, we search for a transfer matrix $\widehat F$ with full normal rank such that the transformed consensus estimator in Remark~\ref{rem:Ftransform} satisfies the necessary conditions for factorization in Section~\ref{sec:factoring}. Here are the second-order consensus estimators for some algorithms that do not factor:
\begin{align*}
  \Gconz(z) &= \bmat{1 & \frac{-\frac{1}{2} z^2}{(z-1)^2} \\ 1 & \frac{\frac{1}{2}-z}{(z-1)^2} \Tstrut} & \text{Exact Diffusion \cite{ExactDiffusion1}} \\[2mm]
  \Gconz(z) &= \bmat{1 & \frac{-\frac{1}{2} z^2}{(z-1)^2} \\ 1 & \frac{-\frac{1}{2}+z-z^2}{(z-1)^2} \Tstrut} & \text{NIDS \cite{NIDS}} \\[2mm]
  \Gconz(z) &= \bmat{1 & \frac{\frac{1}{2}-z}{(z-1)^2} \\ 1 & \frac{\frac{1}{2}-z}{(z-1)^2} \Tstrut} & \text{EXTRA \cite{EXTRA}} \\[2mm]
  \Gconz(z) &= \bmat{1 & \frac{-z(z+\beta-1)}{(z-1)^2} \\ 1 & \frac{1-(1+\beta)z}{(z-1)^2} \Tstrut} & \text{SVL \cite{SVL}}
\end{align*}

\subsubsection{Accelerated algorithms}

Our decomposition also applies to accelerated algorithms. The optimization method then has the form \cite{tmm,lessard}
\[
  \Goptz(z) = -\alpha\,\frac{(1+\gamma)\,z - \gamma}{(z-1)(z-\beta)},
\]
where $\beta$ and $\gamma$ are additional parameters. Examples include $\mathcal{AB}m$ \cite{ABm} based on the heavy-ball optimization method \cite{polyak1964} with ${\gamma=0}$, and $\mathcal{ABN}$ \cite{ABN} based on Nesterov's accelerated method \cite{nesterov-book} with $\gamma=\beta$. For each of these algorithms, the consensus estimator factors into the two first-order estimators
\[
  \Gconleftz(z) = \bmat{1 & \frac{1}{\alpha} \Goptz(z) \\ 1 & \frac{1}{\alpha} \Goptz(z) \Tstrut}
  \quad\text{and}\quad
  \Gconrightz(z) = \bmat{1 & \frac{-1}{z-1} \\ 1 & \frac{-1}{z-1} \Tstrut}.
\]

\section{Perspectives}

Our decomposition of an algorithm into its optimization and consensus components leads to some perspectives that may prove useful for algorithm design.

\paragraph{Robust optimization}

Using our decomposition, we can interpret an algorithm for distributed optimization as an optimization method that, along with the gradient, includes an additional consensus estimator in the loop. If this consensus estimator were to converge arbitrarily fast, then the iterates would never be in disagreement and the system would reduce to that of the centralized optimization method. Because the consensus estimator is not ideal, however, the optimization method must be \textit{robust} to the dynamics of the estimator; see \cite{rmm,mert,scherer,mihailo} for robust optimization methods.

\paragraph{Consensus with feedback}

Alternatively, we can view an algorithm as a second-order consensus estimator whose input is obtained by feeding back the output through the gradient and the optimization method. In this interpretation, the consensus estimator must be stable when connected in feedback. This feedback loop is linear when the local objective functions are quadratic (gradients are linear), but is otherwise \textit{nonlinear}.


Each of these interpretations provides a certain perspective on the combined algorithm. Ideally, the design of the optimization and consensus components would decouple, enabling researchers to make use of the abundant literature on optimization and consensus. Our decomposition provides a first step towards this decoupling, with these perspectives indicating that proper measures of robustness must be taken into account in the algorithm design.

\bibliographystyle{IEEEtran}
{\small\bibliography{references}}

\end{document}